\newtheorem{thm}{Theorem}[section]
\newtheorem{lemma}[thm]{Lemma}
\DeclareMathOperator{\id}{id}
\DeclareMathOperator{\End}{End}
\DeclareMathOperator{\Hom}{Hom}
\theoremstyle{definition}
\newtheorem*{summary*}{Summary}
\newtheorem{defi}[thm]{Definition}
\newtheorem{construction}[thm]{Construction}
\newtheorem{example}[thm]{Example}
\theoremstyle{remark}
\newtheorem*{remark*}{Remark}
\newcommand{\coasA}{\coas_A}
\newcommand{\cocomA}{\textsf{coCom}_A}
\renewcommand{\part}{\mathcal{P}}
\newcommand{\free}{\mathcal{F}}
\newcommand{\cofree}{\free^c}
\newcommand{\CC}{\mathbf{C}}
\newcommand{\PP}{\mathbf{P}}
\newcommand{\MM}{\mathbf{M}}
\newcommand{\II}{\mathbf{I}}
\newcommand{\Tt}{\mathcal{T}}
\DeclareMathOperator{\triv}{triv}
\newcommand{\coas}{\textsf{coAs}}
\title{An operadic approach to operator-valued free cumulants}
\author{Gabriel C. Drummond-Cole}
\thanks{This work was supported by IBS-R003-D1.}
\address{Center for Geometry and Physics, Institute for Basic Science (IBS), Pohang 37673, Republic of Korea}
\begin{document}
\begin{abstract}
An operadic framework is developed to explain the inversion formula relating moments and cumulants in operator-valued free probability theory. 
\end{abstract}
\maketitle
\section{Introduction}
The purpose of this paper is to provide a convenient operadic framework for the cumulants of free probability theory. 

In~\cite{DrummondColeParkTerilla:HPTI,DrummondColeParkTerilla:HPTII}, the author and his collaborators described an operadic framework for so-called Boolean and classical cumulants. In those papers, the fundamental object of study is an algebra $A$ equipped with a linear map $E$, called \emph{expectation}, to some fixed algebra $B$. The expectation is not assumed to be an algebra homomorphism; rather one measures the degree to which $E$ fails to be an algebra homomorphism with a sequence of multilinear maps $\kappa_n$ from powers of $A$ to $B$, called cumulants. The cumulants, in many cases, can be defined recursively in terms of the expectation map via a formula of the form:
\begin{equation}\label{outline of cumulants}
E(x_1\cdots x_n)= \sum \kappa_{i_1}(\cdots)\cdots \kappa_{i_k}(\cdots).
\end{equation}
Depending on what kind of probability theory is under consideration, the summation on the left may be over a different index set. See, e.g.,~\cite{Speicher:OUP,Muraki:FIQUP,HasebeSaigo:JCNI}.

In~\cite{DrummondColeParkTerilla:HPTI,DrummondColeParkTerilla:HPTII}, these recursive definitions for the collection of cumulants (in the Boolean and classical regimes, respectively) were reinterpreted as the collection of linear maps determining a coalgebraic map into a cofree object. In the Boolean case, the cofree object is the tensor coalgebra. In the classical case it is the symmetric coalgebra.

This reformulation is intended as the background for a homotopical enrichment of probability theory; adding a grading, a filtration, and a differential to this coalgebraic picture leads to a rich theory with applications to quantum field theory~\cite{Park:HTPSICIHLA}. This application is motivational and will play no role in this paper.

None of the work mentioned above treats the case of \emph{free cumulants}, arguably the most important kind of cumulant in noncommutative probability theory. When the target algebra $B$ is commutative, there is a formula similar to those above and the framework outlined above can be used directly, employing a more exotic type of coalgebra than the tensor or symmetric coalgebra. This point of view is taken in~\cite{DrummondCole:NCWCFFHPT}.

However, there is a flaw in this point of view, which is that assuming the target to be commutative is external to the theory; internally it makes perfect sense for the target itself to be noncommutative. This is called \emph{operator-valued} free probability theory because the expectation is valued in a noncommutative algebra, such as an operator algebra.

Operator-valued free cumulants, as defined by Speicher,~\cite{Speicher:CTFPAOVFPT} are somewhat more cumbersome to describe explicitly than in the commutative case using classical combinatorial methods. Consequently Speicher develops an \emph{operator-valued $R$-transform} to collect the information concisely.

In our setting, there is one evident related obstruction to extending the framework developed in~\cite{DrummondColeParkTerilla:HPTI,DrummondColeParkTerilla:HPTII} to operator-valued free cumulants. The defining formulas for classical and Boolean cumulants and for free cumulants valued in a commutative algebra share a certain property. Namely, they are \emph{string-like}, meaning that the right-hand side of Equation~(\ref{outline of cumulants}) is a product of cumulants. However, the defining formulas for operator-valued free cumulants (that is, free cumulants valued in a not necessarily commutative algebra) contain terms like
\[
\kappa_2(x_1\kappa_1(x_2)\otimes x_3).
\]
or more generally
\[
\kappa_{n_1}(x_1\kappa_{n_2}(x_2\kappa_{n_3}(\cdots)\kappa_{n_4}(\cdots),\cdots),\cdots).
\]
In a word, they are not string-like but \emph{tree-like}. 

This is precisely the issue that leads Speicher to develop the operator-valued $R$-transform. Here, this tool is avoided by using an operadic reformulation. Tree-like formulas can be obtained by passing from algebras and coalgebras, which have a string-like structure, to nonsymmetric operads and cooperads, which have a tree-like structure. The main result of this paper shows how the relationship between the moments and free cumulants, realized as cooperadic maps $M$ and $K$, is encapsulated quite simply in terms of a canonical twist:
\begin{equation*}
M=\Phi\circ K.
\end{equation*}
As phrased in this paper, the moments and cumulants are defined in some other manner and this is a theorem, but it is probably better to consider this as an alternative definition which is quite simple from the operadic viewpoint.

This reformulation is part of a campaign to explore applications of the operadic language in probability theory; the result contained herein is modest and is intended to serve as further advertisment and evidence (following~\cite{Male:DTLRMTFP,DrummondColeParkTerilla:HPTI,DrummondColeParkTerilla:HPTII,DrummondColeTerilla:CIHPT,DrummondCole:NCWCFFHPT}) of potentially deeper connections between the two areas.

It is possible that both this reformulation and those attempted in the author's previous work (cited above) are reflections of a combinatorial relationship between operads and M\"obius inversion with respect to a poset. This is not pursued further here, but see~\cite[3.3]{Mendez:SOCCS} for some discussion and further references on this topic.

The remainder of the paper is organized as follows. Section~\ref{sec: operads} reviews the parts of operadic theory that are used in the paper. Section~\ref{sec: partitions and trees} goes over the combinatorics of non-crossing partitions, and Section~\ref{sec:freeprob} applies this to define free cumulants. Finally, Section~\ref{sec: main  result} states and proves the reformulation of free cumulants in operadic terms.

\subsection{Conventions}
Everything linear occurs over a fixed ground ring. Algebras are generally not assumed to be commutative or unital. Every finite ordered set is canonically isomorphic to $[n]\coloneqq\{1,\ldots, n\}$, and this canonical isomorphism will be routinely abused. 

A graph is a finite set of vertices, a finite set of half-edges, a source map from half-edges to vertices, and an involution on the half-edges; a half-edge is a leaf it is a fixed point of the involution. A graph is connected if every two vertices can be joined by a path of half-edges connected by having the same source or via the involution. A connected graph is a tree if it has more vertices than edges. A root is a choice of leaf of a tree (this is no longer considered a leaf). The root of a vertex is the unique half-edge ``closest'' to the overall root. The root vertex is the unique vertex whose root is the overall root. A planar tree has a cyclic order on the half-edges of each vertex. 

\section{Operads and cooperads}\label{sec: operads}
Aside from some minor changes, conventions of~\cite{LodayVallette:AO} are used for operadic algebra. This section reviews standard definitions (more details can be seen in~\cite[5.9]{LodayVallette:AO}).
\begin{defi}
A \emph{collection} $M=\{M_n\}_{n\ge 0}$ is a set of modules indexed by nonnegative numbers (the index is called \emph{arity}).

Given a collection $M$, a graph \emph{decorated} by $M$ is a pair $(G,D)$ where $G$ is a graph and $D=\{D_v\}$ is a collection of elements of $M$ indexed by the vertices of $G$; for a vertex of valence $k+1$ the decoration $D_v$ should be in the module $M_k$.

There is a \emph{composition product} denoted $\circ$ on collections
\[
(M\circ N)_n=\bigoplus_k M_k\otimes \left(\bigoplus_{i_1+\cdots+i_k=n} N_{i_1}\otimes\cdots \otimes N_{i_k}\right).
\]
This product has a unit $I$, where $I_1$ is the ground ring and $I_{n\ne 1}$ is $0$, and together $\circ$ and $I$ make the category of collections into a monoidal category.
\end{defi}
\begin{defi}
A \emph{nonsymmetric operad} is a monoid $\PP$ in this monoidal category. Its data can be specified by giving a collection $P$, a \emph{composition} map $\gamma:P\circ P\to P$, and a \emph{unit} map $\eta:I\to P$ satisfying associativity and unital constraintns.

A \emph{nonsymmetric cooperad} is a comonoid $\CC$ in this monoidal category. Its data can be specified by giving a collection $C$, a \emph{decomposition} map $\Delta:C\to C\circ C$, and a \emph{counit} map $\epsilon:C\to I$ satisfying coassociativity and counital constraints.
The collection $I$ has a canonical nonsymmetric cooperad structure, denoted $\II$.
\end{defi}
In this paper everything will be nonsymmetric and the adjective will be omitted.
\begin{defi}
Let $\CC$ be a cooperad with underlying collection $C$. A \emph{coaugmentation} of $\CC$ is a map of cooperads $\eta:\II\to \CC$. 

The decomposition map $\Delta$ induces a decomposition map $\widetilde{\Delta}:C\to C\circ C$ realized by $\Delta-\eta\II\circ\id+\eta\epsilon\circ\eta\II$.

The notation $\widetilde{\Delta}^n$ is used for the map $C\to C^{\circ n+1}$ given by composition of the maps 
\[(\widetilde{\Delta}\circ\underbrace{\id\circ\cdots\circ \id}_{n-1})
:
{C}^{\circ n}\to {C}^{\circ n+1}
.\]
A coaugmented cooperad $\CC$ is \emph{conilpotent} if for every element $c\in\CC$, there is a natural number $N$ such that $\Delta^N c = (\Delta^{N-1}c)\circ \epsilon\II$.
\end{defi}
\begin{example}
\begin{itemize}
\item
The motivating example of an operad is the \emph{endomorphism operad} of a vector space $B$, denoted  $\End B$. The module $(\End B)_n$ is $\Hom(B^{\otimes n},B)$ and the image of the unit is the identity map of $B$. Composition is given by composition of maps among tensor powers of $B$.
\item 
The category of modules is a full subcategory of the category of cooperads (or operads) where for a module $M$, the cooperad $\MM$ has $M_0=M$, $M_1=I_1$, and only trivial compositions.
\item 
The coassociative cooperad has $M_n$ equal to the ground field for all $n$ with every decomposition map induced by the canonical isomorphism between the ground field and its tensor powers.
\end{itemize}
\end{example}
\begin{defi}
Operads have a forgetful functor to collections whose left adjoint is called the \emph{free} operad on a collection.

Conilpotent coaugmented cooperads have a forgetful functor to collections whose right adjoint is called the \emph{cofree} cooperad on a collection (suppressing conilpotence and coaugmentation).
\end{defi}
Both the cofree and free functor on $M$ can be realized at the collection level as the collection of rooted planar trees with vertices decorated by elements of $M$, denoted $\Tt(M)$. This implies the following.
\begin{enumerate}
\item
Fix an operad $\PP$ (with underlying collection $P$) and an element of $\Tt(M)$. That is, take a planar rooted tree $T$ and an element of $\PP(n)$ for every vertex of $T$ of valence $n+1$ (collectively called \emph{a decoration of $T$ by $\PP$}). Then there is a canonical element of $\PP$ called \emph{the composition of the decoration} induced by the counit of the forgetful free adjunction $\free(P)\xrightarrow{\boldsymbol{\epsilon}} \PP$. Since this is a monad, this operation is associative, in the sense that this composition can be done subtree by subtree and the output is insensitive to the choice of subtrees or order of composition.
\item Dually, given a cooperad $\CC$ with underlying collection $C$, the unit of the cofree forgetful adjunction $\CC\xrightarrow{\boldsymbol{\eta}} \cofree(C)$ yields the following. For every planar rooted tree $T$ with $n$ leaves and vertices $\{v_i\}$ where $v_i$ has valence $n_i+1$,  and every element $c\in \CC(n)$, there is a canonical set of elements $c_i\in \CC(n_i)$. This procedure is called the \emph{decomposition} of $c$ into a decoration of $T$ by $\CC$. It can be realized as follows. Let $\widetilde{\Delta}^N c$ stabilize as in the definition of conilpotence. Each summand corresponds to a tree with levels and decorations. Forget the levels and any decorations by $\II$ and project onto the summand corresponding to the tree $T$. See~\cite[5.8.7]{LodayVallette:AO} for more details.
\item
There is a canonical linear isomorphism $\psi$ between the free operad on a collection and the conilpotent cofree cooperad on the same collection.
\end{enumerate}
\begin{lemma}\label{lemma:isomorphismsmatch}
Let $M$ be a collection. An endomorphism $\cofree(M)\to\cofree(M)$ is an isomorphism if and only if its restriction $M\subset \cofree(M)\to M$ is an isomorphism of collections.
\end{lemma}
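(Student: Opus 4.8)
The plan is to exploit the explicit model $\cofree(M)=\Tt(M)$ together with the cofree universal property, showing that any cooperad endomorphism is triangular for the grading by number of vertices and then inverting it weight by weight. Grade $\cofree(M)=\bigoplus_{k\ge 0}W_k$ by letting $W_k$ be spanned by decorated trees with exactly $k$ vertices, so that $W_0=I$ is the coaugmentation, $W_1=M$ is the sub-collection of corollas, and, by conilpotence, every element is a finite sum and hence has a well-defined top weight. Write $\iota\colon M\hookrightarrow\cofree(M)$ for the inclusion of $W_1$ and $p\colon\cofree(M)\to M$ for the projection onto $W_1$, which is the counit of the cofree--forgetful adjunction; thus $p\iota=\id$, and the map named in the statement is $\phi_1:=pf\iota\colon M\to M$.

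First I would recall how $f$ is built from its corestriction. Setting $\phi:=pf\colon\cofree(M)\to M$, the universal property makes $f$ the unique cooperad endomorphism with $pf=\phi$, and it is reconstructed by iterated decomposition (item (2) before the statement): for a tree $c$ of shape $S$, the component of $f(c)$ on a shape $T$ is obtained by taking the $T$-decomposition of $c$ and applying $\phi$ to each of the resulting pieces.

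The key step is to read the weight behaviour off this description. The $T$-decomposition of a weight-$k$ tree splits it into pieces of weights $\ge 1$ summing to $k$, and $\phi$ lands in the weight-one part $M$; hence any contributing $T$ has at most $k$ vertices, so $f(W_k)\subseteq W_1\oplus\cdots\oplus W_k$ and $f$ preserves the filtration $F_k:=W_0\oplus\cdots\oplus W_k$. The top-weight contribution comes only from the finest decomposition, where every piece is a corolla and $\phi$ restricts to $\phi_1$; thus the associated graded $\mathrm{gr}_k f\colon W_k\to W_k$ is $\phi_1$ applied vertexwise, i.e. $\bigoplus_S\phi_1^{\otimes k}$ over shapes $S$ with $k$ vertices. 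Consequently $\mathrm{gr}_k f$ is an isomorphism for all $k$ if and only if $\phi_1$ is one, using the case $k=1$ in one direction and stability of isomorphisms under tensor powers in the other.

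To conclude I would invoke functoriality of $\mathrm{gr}$ for the forward direction: if $f$ is an isomorphism then so is $\mathrm{gr} f$, hence so is $\phi_1=\mathrm{gr}_1 f$. For the converse, assume $\phi_1$ is an isomorphism; then $D:=\mathrm{gr} f$ is a weight-preserving isomorphism and $f=D+L$ with $L$ strictly lowering weight, so $D^{-1}L$ is locally nilpotent and the Neumann series $\sum_{j\ge 0}(-D^{-1}L)^j$ is a finite sum on each element, providing a two-sided inverse to $D^{-1}f=\id+D^{-1}L$. Thus $f$ is bijective in each arity, and a bijective cooperad map is an isomorphism of cooperads. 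The only real obstacle is the bookkeeping in the key step, namely verifying that the diagonal block of $f$ is exactly the vertexwise $\phi_1$ rather than something involving $\phi$ on higher weights; once this triangularity is in hand, conilpotence (finiteness of weight) makes the rest formal.
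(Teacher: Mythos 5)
Your proof is correct and follows essentially the same route as the paper's (much terser) argument: both rest on the observation that a cooperad endomorphism of $\cofree(M)$ is triangular for the grading by number of vertices, with diagonal determined by the restriction to $M$, so that the forward direction follows from functoriality of the restriction and the converse from induction on weight. Your write-up simply makes explicit the bookkeeping (filtration preservation, the identification of the associated graded with the vertexwise restriction, and the Neumann-series inversion) that the paper leaves to the reader.
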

This is in precise parallel to the situation with power series, where a power series is invertible if and only if its constant term is invertible.
\begin{proof}
For $F$ an endomorphism, let $F_r$ denote its restriction. Note that $(F\circ G)_r = F_r\circ G_r$, which implies that if $F$ is an isomorphism, so is $F_r$. On the other hand, if $F_r$ is an isomorphism, then induction on the number of vertices in a tree in $\cofree(M)$ allows one to build an inverse $F^{-1}$.
\end{proof}
\begin{defi}\label{defi:canonical twist}
Let $\PP$ be an operad with underlying collection $P$ The \emph{canonical twist} $\Phi_\PP:\cofree(P)\to\cofree(P)$ is the cooperad map induced by the composition $\phi_\PP=\boldsymbol{\epsilon}\circ \psi$:
\[\cofree(P)\xrightarrow{\psi} \free(P)\xrightarrow{\boldsymbol{\epsilon}} \PP\to P.\]
\end{defi}
\begin{lemma}
Let $\PP$ be an operad. Then the canonical twist is an isomorphism.
\end{lemma}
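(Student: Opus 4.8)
The plan is to deduce the statement directly from Lemma~\ref{lemma:isomorphismsmatch}, which reduces the claim to checking that the restriction $(\Phi_\PP)_r\colon P\to P$ is an isomorphism of collections. In fact I expect to prove the cleaner statement that $(\Phi_\PP)_r=\id_P$, which is certainly an isomorphism, so that the lemma applies immediately.

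First I would unwind the restriction using the universal property of the cofree cooperad. By that property $\Phi_\PP$ is the unique cooperad endomorphism of $\cofree(P)$ whose composite with the canonical projection $\pi\colon\cofree(P)\to P$ (the counit of the cofree--forgetful adjunction, i.e.\ the projection onto the corolla summand) equals $\phi_\PP$; that is, $\pi\circ\Phi_\PP=\phi_\PP$. Writing $\iota\colon P\hookrightarrow\cofree(P)$ for the inclusion of $P$ as the corollas (single-vertex decorated trees), the restriction is by definition $(\Phi_\PP)_r=\pi\circ\Phi_\PP\circ\iota=\phi_\PP\circ\iota$. Thus everything reduces to evaluating $\phi_\PP$ on corollas.

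Next I would compute $\phi_\PP\circ\iota$ by tracing a corolla through the maps defining $\phi_\PP=\boldsymbol{\epsilon}\circ\psi$ (followed by the forgetful map $\PP\to P$). At the level of collections the isomorphism $\psi$ is a relabeling of the common underlying collection $\Tt(P)$ of decorated planar trees, so it is the identity on the corolla summand. The free counit $\boldsymbol{\epsilon}\colon\free(P)\to\PP$ composes the decoration of a tree; on a corolla decorated by $p$ this composition is trivial and returns $p$ itself. Finally $\PP\to P$ is the identity on the underlying collection. Composing, a corolla decorated by $p$ is sent to $p$, so $(\Phi_\PP)_r=\phi_\PP\circ\iota=\id_P$.

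The only point requiring care -- and the step I would verify most carefully -- is that no higher-vertex corrections enter into these identifications of $\psi$ and $\boldsymbol{\epsilon}$ on corollas, and that $\pi\circ\Phi_\PP=\phi_\PP$ as claimed. Such corrections cannot appear because corollas are the primitives of the conilpotent cooperad $\cofree(P)$ (a corolla admits only the trivial decomposition), so any cooperad map carries a corolla to a corolla; this is precisely the mechanism $F\iota=\iota F_r$ underlying the proof of Lemma~\ref{lemma:isomorphismsmatch}. Once $(\Phi_\PP)_r=\id_P$ is established, that lemma finishes the proof.
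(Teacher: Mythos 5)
Your proposal is correct and follows exactly the paper's argument: show that the restriction of the canonical twist to $P$ is the identity, then invoke Lemma~\ref{lemma:isomorphismsmatch}. The paper states the first step without elaboration; your unwinding of $\phi_\PP$ on corollas is a careful verification of that same claim.
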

\begin{proof}
Restricted to $P$, the canonical twist is the identity. Then Lemma~\ref{lemma:isomorphismsmatch} implies the result.
\end{proof}
\section{Partitions and trees}\label{sec: partitions and trees}
\begin{defi}
Let $[n]$ be an ordered set and let $\pi=(p_1,\ldots, p_k)$ be a partition of it, so that $[n]$ is the disjoint union of the blocks $p_i$. Blocks in our partitions are always ordered so that $\min p_i<\min p_j$ whenever $i<j$. A partition $\pi$ is \emph{crossing} if there exist $w$ and $y$ in $p_i$ and $x$ and $z$ in $p_j$ (with $i\ne j$) such that $w<x<y<z$. A partition $\pi$ is \emph{non-crossing} if it is not crossing. The notation $NC(n)$ (respectively $NC_k(n)$) refers to the set of noncrossing partitions of $[n]$ (with $k$ blocks). The unique partition with a single block is called the trivial partition.
\end{defi}
Noncrossing partitions are important in combinatorics and there are many bi-indexed sets of combinatorial objects in canonical bijection with them. 
For our purposes, the following such bijection will be useful.
\begin{lemma}
The set $NC_k(n)$ is in bijection with the set of planar rooted trees with $n$ leaves and $k+1$ vertices (including the root) satisfying the conditions that
\begin{enumerate}
\item Every non-root vertex has at least one leaf attached to it, and
\item the root has no leaves attached to it.
\end{enumerate}
The bijection from trees to partitions is given explicitly by numbering the leaves clockwise starting from the root and then letting two numbers share a block if the corresponding leaves are incident on the same vertex. Thus blocks are in bijection with non-root vertices.
\end{lemma}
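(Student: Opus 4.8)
The plan is to exhibit an explicit inverse to the stated map and to check that the two are mutually inverse by an induction that peels off one block (equivalently one subtree of the root) at a time. Write $\Theta$ for the stated map from trees to partitions. First I would record that $\Theta$ is well defined as a map into partitions of $[n]$ with $k$ blocks: conditions (1) and (2) say that each of the $n$ leaves is attached to exactly one of the $k$ non-root vertices, so ``attached to the same vertex'' is an equivalence relation on $[n]$ with exactly $k$ nonempty classes.

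The geometric heart of the argument is the observation that, in a planar rooted tree, the leaves descended from any fixed vertex $v$ occupy a contiguous interval $\{l,\dots,r\}$ of the clockwise numbering. I would prove this by induction on the height of the subtree at $v$: the cyclic order at $v$ presents the directly attached leaves of $v$ interleaved with its internal children, and clockwise numbering from the root traverses the entire subtree at $v$ as a single uninterrupted arc, each internal child contributing (inductively) its own subinterval. Granting this, the directly attached leaves of $v$---that is, the block of $v$---subdivide the descendant interval of $v$ into the descendant intervals of its internal children. It follows that the descendant spans of any two blocks are either disjoint or nested, and in the nested case the inner block lies within a single gap between consecutive elements of the outer block. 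This is exactly the statement that $\Theta(T)$ is non-crossing, so $\Theta$ lands in $NC_k(n)$.

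For the inverse I would use that non-crossingness makes the blocks laminar, which yields a recursive decomposition matching that of planar trees. Concretely, to assign an ordered forest to an interval $[a,b]$ carrying a non-crossing partition, let $B=\{c_1<\cdots<c_m\}$ be the block containing $a=c_1$; the non-crossing condition forces $[a,\max B]$ to be a union of full blocks and each gap $(c_i,c_{i+1})$ to carry a non-crossing partition. I would build the first tree of the forest by placing a vertex $v_B$ whose directly attached leaves, in clockwise order, are $c_1,\dots,c_m$, interleaved with the forests recursively assigned to the gaps $(c_1,c_2),\dots,(c_{m-1},c_m)$, and then append the forest recursively assigned to $[\max B+1,b]$. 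Defining $\Xi(\pi)$ to be the root vertex together with the forest assigned to $[1,n]$ produces a planar rooted tree with one non-root vertex per block---hence $k+1$ vertices---carrying $m=|B|\ge 1$ leaves at each such vertex and none at the root, so (1) and (2) hold.

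It then remains to check $\Theta\circ\Xi=\id$ and $\Xi\circ\Theta=\id$. Both follow by induction on $k$ once one confirms that the clockwise numbering of $\Xi(\pi)$ reproduces the given order on $[n]$---which holds because the recursion respects the left-to-right order---and that $\Theta$ sends the first child subtree of the root to the block of $1$ and its descendant interval to $[1,\max B]$. I expect the main obstacle to be precisely this bookkeeping of conventions: verifying the interval sublemma and ensuring that ``clockwise from the root'' is consistently identified with the natural order on $[n]$, so that the recursive decomposition of non-crossing partitions (peeling the block of $1$) runs in exact lockstep with the recursive decomposition of planar trees (peeling the first child of the root). Once these conventions are fixed, both composites are identities by a routine induction.
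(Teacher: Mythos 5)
Your proposal is correct and complete. The paper itself offers no proof of this lemma at all --- it simply asserts the bijection as a standard combinatorial fact and points to Figure~1 --- so there is no argument to compare yours against. Your route (well-definedness of $\Theta$, the contiguous-interval property of descendant leaves to get non-crossingness, and the recursive inverse $\Xi$ that peels off the block of the minimum and matches the laminar structure of a non-crossing partition to the recursive structure of a planar rooted forest) is the standard one, and you have correctly isolated the only two points where care is required: the interval sublemma, and the consistency of the clockwise leaf numbering with the natural order on $[n]$. One small check worth making explicit when you write it up: in showing non-crossingness, if $w<x<y<z$ with $w,y$ attached to $v_i$ and $x$ attached to $v_j\ne v_i$, then $v_j$ lies in a subtree hanging off $v_i$ strictly between the half-edges to $w$ and $y$, so \emph{all} leaves of $v_j$ (in particular $z$) lie in $(w,y)$ --- this is the concrete instance of your ``disjoint or nested'' dichotomy that does the work.
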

See Figure~\ref{figure: partitions trees}. Henceforth partitions will be freely identified with the corresponding trees.
\begin{figure}
\begin{tikzpicture}
\draw (0,0) -- (0,-1);
\draw (0,0) -- (0,1);
\draw (2.5,2) -- (0,1);
\draw (1.5,2) -- (0,1);
\draw (.5,2) -- (0,1);
\draw (-.5,2) -- (0,1);
\draw (-1.5,2) -- (0,1);
\draw (-2.5,2) -- (0,1);
\draw (-1.5,2) -- (-1.5,3);
\draw (-.5,2) -- (-.5,3);
\draw (1.5,2) -- (1.5,3);
\draw [fill] (0,0) circle [radius=.05];
\draw [fill] (0,1) circle [radius=.05];
\draw [fill] (-1.5,2) circle [radius=.05];
\draw [fill] (-.5,2) circle [radius=.05];
\draw [fill] (1.5,2) circle [radius=.05];
\node [above] at (-2.5,2) {$1$};
\node [above] at (-1.5,3) {$2$};
\node [above] at (-.5,3){$3$};
\node [above] at (.5,2){$4$};
\node [above] at (1.5,3) {$5$};
\node [above] at (2.5,2) {$6$};
\node [below] at (0,-1) {$(146)(2)(3)(5)$};
\draw (6,0) -- (6,-1);
\draw (6,0) -- (7,1);
\draw (6,0) -- (5,1);
\draw (5,1) -- (6,2);
\draw (5,1) -- (5,2);
\draw (5,1) -- (4,2);
\draw (5,2) -- (4,3);
\draw (5,2) -- (5,3);
\draw (5,2) -- (6,3);
\draw (5,3) -- (5,4);
\draw (7,1) -- (7,2);
\draw [fill] (6,0) circle [radius=.05];
\draw [fill] (5,1) circle [radius=.05];
\draw [fill] (5,2) circle [radius=.05];
\draw [fill] (5,3) circle [radius=.05];
\draw [fill] (7,1) circle [radius=.05];
\node [above] at (4,2) {$1$};
\node [above] at (4,3) {$2$};
\node [above] at (5,4) {$3$};
\node [above] at (6,3) {$4$};
\node [above] at (6,2) {$5$};
\node [above] at (7,2) {$6$};
\node [below] at (6,-1) {$(15)(24)(3)(6)$};
\end{tikzpicture}
\caption{Two non-crossing partitions and the corresponding planar rooted trees}
\label{figure: partitions trees}
\end{figure}
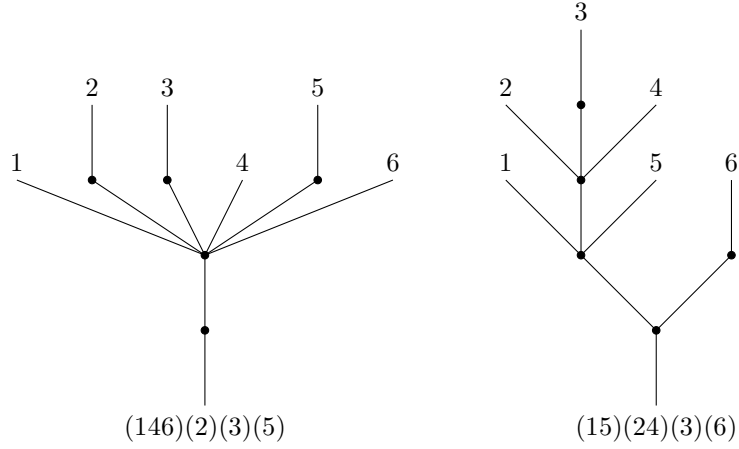
It will be useful later to modify this construction.
\begin{construction}\label{construction: messy tree}
Let $\pi$ be a noncrossing partition of $[n]$. There is a function $h:\{0,\ldots,n\}\to \{*,B_1,\ldots B_k\}$ defined by letting $h(i)$ be the maximal block (should one exist), which contains elements $x$ and $y$ of $[n]$ such that $x\le i$ and $y>i$. Should no such block exist, then $h(i)=*$.

Now let $i_0,\ldots, i_n$ be non-negative numbers. The tree $\pi_{i_0,\ldots,i_n}$ is obtained from $\pi$ by the following procedure:
\begin{enumerate}
\item For each $j$, attach $i_j$ new leaves at the vertex $h(j)$ of $\pi$ (if $h(j)=*$, attach to the root vertex) in the unique possible way so that the new leaves are after the $j$th original leaf and before the $j+1$st original leaf of $\pi$, using the clockwise order around leaves.
\item Consider a non-root vertex $v$. Its incoming half-edges are of the form $e_{0,1},\ldots,e_{0,k_0},\ell_1,e_{1,1},\ldots, e_{1,k_1},\ell_2,\ldots, \ell_r, e_{r_1,\ldots, e_{r,k_r}}$, where $\ell_i$ are original leaves and $e_{i,j}$ are new leaves or parts of edges.

Let $S_-$ be the set of indices $j$ in $\{0,\ldots, r\}$ such that $k_j\ge 1$ and let $S_+$ be the set of indices $j$ such that $k_j>1$. Now replace $v$ with a tree which has 
\begin{enumerate}
\item one ``bottom'' vertex with incoming half-edges in ordered bijection with $S_-$ and 
\item \label{item: bottom top} ``top'' vertices in ordered bijection with $S_+$ where the vertex $v_j$ has $k_j$ incoming half-edges.
\end{enumerate}
Join the root of a top vertex with the corresponding incoming half-edge of the bottom vertex; the other incoming half-edges of the bottom vertex and all incoming half-edges of the top vertex are identified with the incoming half-edges of $v$. 
\item Delete all of the original leaves of $\pi$; delete the root if it is bivalent at this point in the construction.
\end{enumerate}
The tree obtained after the intermediate step~\ref{item: bottom top} will also be important and will be called $\overline{\pi}_{i_0,\ldots, i_n}$.
\end{construction}
See Figure~\ref{fig: messy tree}.
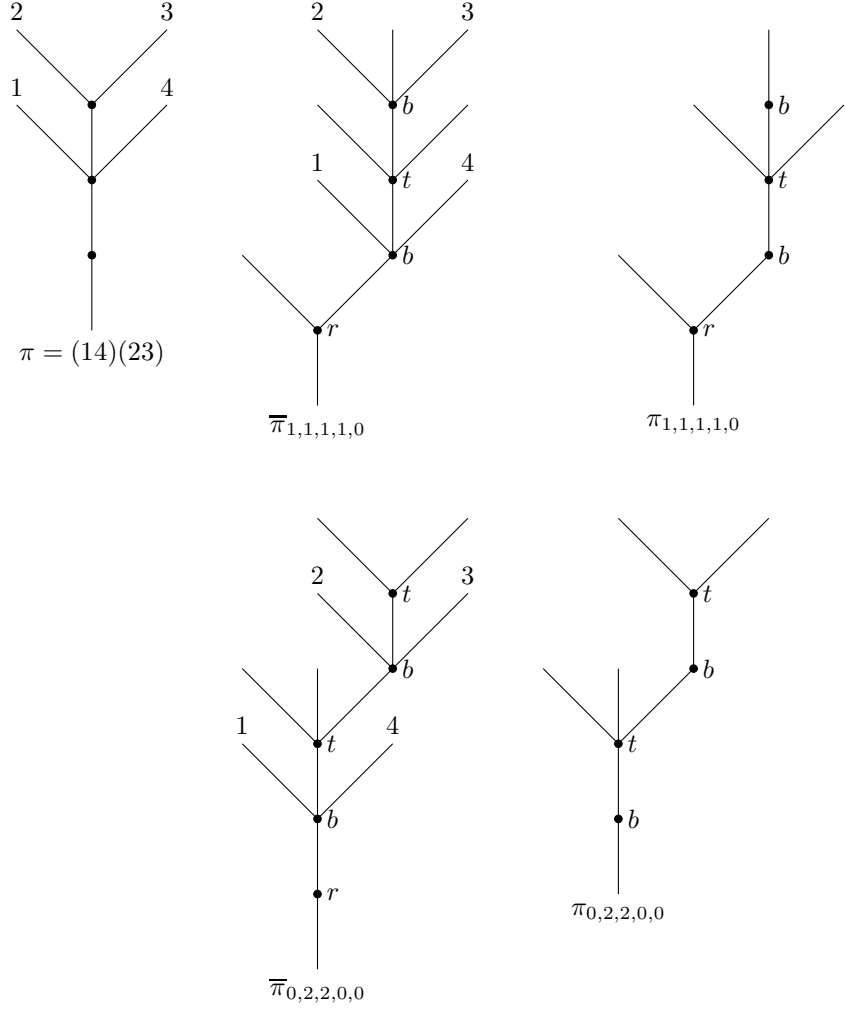
\begin{figure}
\begin{tikzpicture}
\draw(0,0)--(0,1);
\draw (0,1)--(0,2);
\draw(0,2)--(-1,3);
\draw(0,2)--(1,3);
\draw(0,2)--(0,3);
\draw(0,3)--(1,4);
\draw(0,3)--(-1,4);
\draw [fill] (0,1) circle [radius=.05];
\draw [fill] (0,2) circle [radius=.05];
\draw [fill] (0,3) circle [radius=.05];
\node[below] at (0,0){$\pi=(14)(23)$};
\node[above] at (-1,3){$1$};
\node[above] at (-1,4){$2$};
\node[above] at (1,4){$3$};
\node[above] at (1,3){$4$};
\begin{scope}[shift ={(3,4)}]
\draw(0,-5)--(0,-4);
\draw (0,-4)--(1,-3);
\draw  (0,-4)--(-1,-3);
\draw  (1,-3)--(0,-2);
\draw(1,-3)--(1,-2);
\draw  (1,-3)--(2,-2);
\draw(1,-2)--(2,-1);
\draw(1,-2)--(0,-1);
\draw(1,-2)--(1,-1);
\draw(0,0)--(1,-1);
\draw(1,0)--(1,-1);
\draw(2,0)--(1,-1);
\draw [fill] (0,-4) circle [radius=.05];
\draw [fill] (1,-3) circle [radius=.05];
\draw [fill] (1,-2) circle [radius=.05];
\draw [fill] (1,-1) circle [radius=.05];
\node[right] at (0,-4){$r$};
\node[right] at (1,-3){$b$};
\node[right] at (1,-2){$t$};
\node[right] at (1,-1){$b$};
\node[below] at (0,-5){$\overline{\pi}_{1,1,1,1,0}$};
\node[above] at (0,-2){$1$};
\node[above] at (0,0){$2$};
\node[above] at (2,0){$3$};
\node[above] at (2,-2){$4$};
\end{scope}
\begin{scope}[shift ={(8,4)}]
\draw(0,-5)--(0,-4);
\draw (0,-4)--(1,-3);
\draw  (0,-4)--(-1,-3);
\draw (1,-3)--(1,-2);
\draw  (1,-2)--(0,-1);
\draw(1,-2)--(1,-1);
\draw  (1,-2)--(2,-1);
\draw(1,-1)--(1,0);
\draw [fill] (0,-4) circle [radius=.05];
\draw [fill] (1,-3) circle [radius=.05];
\draw [fill] (1,-2) circle [radius=.05];
\draw [fill] (1,-1) circle [radius=.05];
\node[right] at (0,-4){$r$};
\node[right] at (1,-3){$b$};
\node[right] at (1,-2){$t$};
\node[right] at (1,-1){$b$};
\node[below] at (0,-5){${\pi}_{1,1,1,1,0}$};
\end{scope}
\begin{scope}[shift ={(3,-8.5)}]
\draw(0,0)--(0,1);
\draw (0,1)--(0,2);
\draw(0,2)--(-1,3);
\draw(0,2)--(1,3);
\draw(0,2)--(0,3);
\draw(0,3)--(-1,4);
\draw(0,3)--(0,4);
\draw(0,3)--(1,4);
\draw (1,4)--(1,5);
\draw (1,4)--(0,5);
\draw (1,4)--(2,5);
\draw (1,5)--(2,6);
\draw(1,5)--(0,6);
\draw [fill] (0,1) circle [radius=.05];
\draw [fill] (0,2) circle [radius=.05];
\draw [fill] (0,3) circle [radius=.05];
\draw [fill] (1,4) circle [radius=.05];
\draw [fill] (1,5) circle [radius=.05];
\node[right] at (0,1){$r$};
\node[right] at (0,2){$b$};
\node[right] at (0,3){$t$};
\node[right] at (1,4){$b$};
\node[right] at (1,5){$t$};
\node[below] at (0,0){$\overline{\pi}_{0,2,2,0,0}$};
\node[above] at (-1,3){$1$};
\node[above] at (0,5){$2$};
\node[above] at (2,5){$3$};
\node[above] at (1,3){$4$};
\end{scope}
\begin{scope}[shift ={(7,-8.5)}]
\draw (0,1)--(0,2);
\draw(0,2)--(0,3);
\draw(0,3)--(-1,4);
\draw(0,3)--(0,4);
\draw(0,3)--(1,4);
\draw (1,4)--(1,5);
\draw (1,5)--(2,6);
\draw(1,5)--(0,6);
\draw [fill] (0,2) circle [radius=.05];
\draw [fill] (0,3) circle [radius=.05];
\draw [fill] (1,4) circle [radius=.05];
\draw [fill] (1,5) circle [radius=.05];
\node[right] at (0,2){$b$};
\node[right] at (0,3){$t$};
\node[right] at (1,4){$b$};
\node[right] at (1,5){$t$};
\node[below] at (0,1){${\pi}_{0,2,2,0,0}$};
\end{scope}
\end{tikzpicture}
\caption{Examples of Construction~\ref{construction: messy tree}. Top, bottom, and root vertices are labelled $t$, $b$, and $r$ respectively.}\label{fig: messy tree}
\end{figure}
\begin{remark*}
Note that the first and last incoming half-edge at each ``bottom'' vertex of $\overline{\pi}_{i_0,\ldots, i_n}$ are always original leaves of $\pi$.
\end{remark*}
\section{Free probability and operator-valued free cumulants}\label{sec:freeprob}
\begin{defi}
Let $B$ be an algebra. A \emph{$B$-valued probability space} consists of a pair $(A,E)$ where $A$ is a $B$-algebra and $E$ is a $B$-linear map, called \emph{expectation} $A\to B$ such that the composition $B\to A\to B$ is the identity. By abuse of notation, $E$ will usually be omitted.
\end{defi}
Classically $B$ is the ground field but for a general theory it is necessary to allow more general algebras, in particular, non-commutative algebras. To be precise, a $B$-algebra is a $B$-bimodule $A$ equipped with a product $A\otimes_B A\to A$ and a $B$-linear map $\eta:B\to A$ which respects the product structure. 

Let $A$ be a $B$-valued probability space and let $f:A^{\otimes_B n}\to B$ be a $B$-multilinear map. For an $(n+1)$-tuple $(i_0,\ldots, i_n)$ of non-negative integers with sum $N$, define a map
\[
f_{i_0,\ldots, i_n}: Hom(A^{\otimes n}, Hom(B^{\otimes N}, B))
\]
whose evaluation on $a_1\otimes a_n$ is given by the composition
\[
B^{\otimes N}\to 
B^{\otimes i_0}\otimes A\otimes B^{\otimes i_1}\otimes A\otimes\cdots\otimes A\otimes B^{\otimes i_n}
\to A^{\otimes_B n}\xrightarrow{f} B.
\]
Where the first map inserts $a_j$ in the $j$th $A$ place and the second map is given by repeated use of the $B$-bimodule structure on $A$.

The map $f_{i_0,\ldots, i_n}(a_1,\ldots, a_n)$ can be realized as the composition in $\End B$ along a decoration of the tree $\pi_{\min\{i_0,1\},\ldots, \min\{i_n,1\}}$, where $\pi$ is the trivial partition of $[n]$. Decorate ``top'' vertices and the root, should it exist, with the product in $B$ and decorate the single vertex corresponding to the single block of $\pi$ with $f_{\min\{i_0,1\},\ldots, \min\{i_n,1\}}(a_1,\ldots, a_n)$.

The following definition is Definition 2.1.1 of~\cite{Speicher:CTFPAOVFPT}, restricted to $B$-algebras. It has been reworded to use operadic language.
\begin{defi}\label{defi: multiplicative function}
Let $A$ be a $B$-algebra. For $n\ge 1$, let $f^{(n)}:A^{\otimes_B n}\to B$ be a $B$-linear map. Then the \emph{multiplicative function} 
\[\hat{f}:\bigcup_n NC(n)\times A^{\otimes_B n}\to B\]
is defined on $(\pi, a_1\otimes\cdots\otimes a_n)$ as the composition in $\End B$ along a decoration of the tree $\pi_{0,\ldots, 0}$. ``Top'' vertices and the root vertex of $\pi$, if it survives in $\pi_{0,\ldots,0}$, are decorated with the product in $B$. Let $v$ be a ``bottom'' vertex of $\overline{\pi}_{0,\ldots, 0}$ with ordered incoming half-edge set
\[
\ell_0,e_{1,1},\ldots,e_{1,k_1},\ell_1,\ldots,\ell_{k-1},e_{k,i_k},\ell_k
\]
where $\ell_i$ is an original leaf of $\pi$ which is numbered $n(\ell_i)$ in $\pi$ and $i_j$ are non-negative numbers. Then the corresponding ``bottom'' vertex of $\pi_{0,\ldots 0}$ is decorated with
\[f^{(k)}_{0,i_1,\ldots, i_k,0}(a_{n(\ell_0)}\otimes\cdots\otimes a_{n(\ell_k)}).\]
Then $\hat{f}(\pi,a_1\otimes\cdots\otimes a_n)$ is the composition of this decoration of $\pi_{0,\ldots, 0}$ in the operad $\End B$, viewed as an element of $(\End B)(0)\cong B$.
 \end{defi} 
See Figure~\ref{figure: multiplicative function}. 
 \begin{figure}
\begin{tikzpicture}
\draw (0,0) -- (0,-1);
\draw (0,0) -- (0,1);
\draw (2.5,2) -- (0,1);
\draw (1.5,2) -- (0,1);
\draw (.5,2) -- (0,1);
\draw (-.5,2) -- (0,1);
\draw (-1.5,2) -- (0,1);
\draw (-2.5,2) -- (0,1);
\draw (-1.5,2) -- (-1.5,3);
\draw (-.5,2) -- (-.5,3);
\draw (1.5,2) -- (1.5,3);
\draw [fill] (0,0) circle [radius=.05];
\draw [fill] (0,1) circle [radius=.05];
\draw [fill] (-1.5,2) circle [radius=.05];
\draw [fill] (-.5,2) circle [radius=.05];
\draw [fill] (1.5,2) circle [radius=.05];
\node [above] at (-2.5,2) {$1$};
\node [above] at (-1.5,3) {$2$};
\node [above] at (-.5,3){$3$};
\node [above] at (.5,2){$4$};
\node [above] at (1.5,3) {$5$};
\node [above] at (2.5,2) {$6$};
\node [below] at (0,-1) {$\pi=(146)(2)(3)(5)$};
\begin{scope}[shift = {(6,0)}]
\draw (0,0) -- (0,-1);
\draw (0,0) -- (0,1);
\draw (2,2) -- (0,1);
\draw (1,2) -- (0,1);
\draw (0,2) -- (0,1);
\draw (-1,2) -- (0,1);
\draw (-2,2) -- (0,1);
\draw (-1,2) -- (-2,3);
\draw (-1,2) -- (0,3);
\draw (0,3) -- (0,4);
\draw (-2,3) -- (-2,4);
\draw (1,2) -- (1,3);
\draw [fill] (0,0) circle [radius=.05];
\draw [fill] (0,1) circle [radius=.05];
\draw [fill] (-1,2) circle [radius=.05];
\draw [fill] (1,2) circle [radius=.05];
\draw [fill] (-2,3) circle [radius=.05];
\draw [fill] (0,3) circle [radius=.05];
\node [above] at (-2,2) {$1$};
\node [above] at (-2,4) {$2$};
\node [above] at (0,4){$3$};
\node [above] at (0,2){$4$};
\node [above] at (1,3) {$5$};
\node [above] at (2,2) {$6$};
\node [below] at (0,-1) {$\overline{\pi}_{0,\ldots, 0}$};
\end{scope}
\begin{scope}[shift = {(0,-6)}]
\draw (0,0) -- (0,1);
\draw (1,2) -- (0,1);
\draw (-1,2) -- (0,1);
\draw (-1,2) -- (-2,3);
\draw (-1,2) -- (0,3);
\draw [fill] (0,1) circle [radius=.05];
\draw [fill] (-1,2) circle [radius=.05];
\draw [fill] (1,2) circle [radius=.05];
\draw [fill] (-2,3) circle [radius=.05];
\draw [fill] (0,3) circle [radius=.05];
\node [below] at (0,0) {${\pi}_{0,\ldots, 0}$};
\end{scope}
\begin{scope}[shift = {(6,-6)}]
\draw (0,0) -- (0,1);
\draw (1,2) -- (0,1);
\draw (-1,2) -- (0,1);
\draw (-1,2) -- (-2,3);
\draw (-1,2) -- (0,3);
\draw [fill] (0,1) circle [radius=.05];
\draw [fill] (-1,2) circle [radius=.05];
\draw [fill] (1,2) circle [radius=.05];
\draw [fill] (-2,3) circle [radius=.05];
\draw [fill] (0,3) circle [radius=.05];
\node [right] at (0,1) {$f^{(3)}_{0,1,1,0}(a_1\otimes a_4\otimes a_6)$};
\node [right] at (1,2) {$f^{(1)}_{0,0}(a_5)$};
\node [right=2] at (-1,2) {$\mu_B$};
\node [above] at (-2,3) {$f^{(1)}_{0,0}(a_2)$};
\node [above] at (0,3) {$f^{(1)}_{0,0}(a_3)$};
\node [below] at (0,0) {$\hat{f}((146)(2)(3)(5),a_1\otimes\cdots\otimes a_6)$};
\end{scope}
\end{tikzpicture}
\caption{This figure demonstrates the evaluation of the multiplicative function $\hat{f}$ on the element 
\[(146)(2)(3)(5),a_1\otimes\cdots\otimes a_6.\] 
The eventual output is
\[f^{(3)}(a_1f^{(1)}(a_2)
f^{(1)}(a_3)\otimes a_4f^{(1)}(a_5)\otimes a_6).
\]
}
 \label{figure: multiplicative function}
 \end{figure}
The following definition combines Example 1.2.2, Definition 2.1.6, and Proposition 3.2.3 of~\cite{Speicher:CTFPAOVFPT}.
\begin{defi}
Let $A$ be a $B$-valued probability space. The free cumulant $\kappa_n:A^{\otimes_B n}\to B$ is defined recursively in terms of the expectation as follows:
\[
E(a_1\cdots a_n)=\sum_{\pi\in NC(n)} \hat{\kappa}(\pi,a_1\otimes\cdots\otimes a_n)
\]
\end{defi}
It will be useful in the next section to record a version of this defining relationship viewed in $\End B$. The following is a direct application of the definitions.
\begin{lemma}\label{lemma: moment-cumulant formula with insertions}
The moment and cumulant satisfy the following relations for nonnegative $i_0,\ldots, i_n$ and $a_1,\ldots, a_n$ in $A$:
\[
E_{i_0,\ldots, i_n}(a_1,\ldots, a_n) = \left(\sum_{\pi\in NC(n)}\hat{\kappa}(\pi,\bullet)\right)_{i_0,\ldots, i_n}(a_1,\ldots, a_n).
\]
which in turn is the composition in $\End B$ along the decorated tree $\pi_{i_0,\ldots,i_n}$ with decoration as in Definition~\ref{defi: multiplicative function}.
\end{lemma}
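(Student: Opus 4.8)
The plan is to read off both assertions from the definitions; the only genuine content is a bookkeeping match between the $B$-bimodule structure of $A^{\otimes_B n}$ and the combinatorics of Construction~\ref{construction: messy tree}.

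First I would record that, for a fixed tuple $(i_0,\ldots,i_n)$ of sum $N$, the assignment $f\mapsto f_{i_0,\ldots,i_n}$ is linear in the $B$-multilinear map $f\colon A^{\otimes_B n}\to B$. This is immediate from the defining composition
\[
B^{\otimes N}\to B^{\otimes i_0}\otimes A\otimes\cdots\otimes A\otimes B^{\otimes i_n}\to A^{\otimes_B n}\xrightarrow{f} B,
\]
in which $f$ appears only as the final arrow while the first two arrows do not involve $f$. The defining recursion for the free cumulants is precisely the statement that the two $B$-multilinear maps $A^{\otimes_B n}\to B$ sending $a_1\otimes\cdots\otimes a_n$ to $E(a_1\cdots a_n)$ and to $\sum_{\pi\in NC(n)}\hat{\kappa}(\pi,\bullet)$ coincide. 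Applying the linear operation $(-)_{i_0,\ldots,i_n}$ to this equality, and recalling that $E_{i_0,\ldots,i_n}$ abbreviates the insertion applied to the $n$th moment map, yields the displayed equality at once; linearity moreover lets me pass the insertion inside the sum, so it suffices to treat a single $\pi$.

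The remaining task is to identify, for each fixed $\pi$, the map $\hat{\kappa}(\pi,\bullet)_{i_0,\ldots,i_n}(a_1,\ldots,a_n)$ with the composition in $\End B$ along $\pi_{i_0,\ldots,i_n}$ decorated as in Definition~\ref{defi: multiplicative function}. By that definition $\hat{\kappa}(\pi,\bullet)$ is already the composition along $\pi_{0,\ldots,0}$, with ``top'' vertices and the surviving root carrying the product of $B$ and each ``bottom'' vertex carrying a cumulant with child-insertions. The insertion operation precomposes this with the map that distributes the $i_j$ intervening copies of $B$ around the $a$'s and then absorbs them via the bimodule structure. Here I would argue by pushing each inserted copy of $B$ to its home: since the intermediate value lands in $A^{\otimes_B n}$ and every constituent map (the products in $B$ and the cumulants) is $B$-linear, a copy of $B$ inserted at position $j$ may be slid across the relevant $\otimes_B$ and pulled inside the cumulant or product attached to the block $h(j)$, coming to rest between the images of the original leaves $j$ and $j+1$. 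After all $N$ copies have been moved, the decorated tree that remains is exactly $\pi_{i_0,\ldots,i_n}$: attaching $i_j$ new leaves at $h(j)$ is the combinatorial shadow of this sliding, the collapse of bottom and top vertices (passing from $\overline{\pi}_{i_0,\ldots,i_n}$ to $\pi_{i_0,\ldots,i_n}$) records the $B$-multiplications, and the prescribed decoration is precisely the cumulant-with-insertions produced by pulling the $B$'s inside.

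The hard part will be this last matching, namely verifying that the freely slidable copies of $B$ really do come to rest in the slots dictated by $h$ and by the clockwise order of leaves. Concretely I would split into cases according to whether $h(j)$ is the root or an honest block, and according to the position of $j$ relative to the original leaves and child subtrees meeting that vertex, checking in each case that the bimodule bookkeeping agrees with the placement of new leaves in Construction~\ref{construction: messy tree}. This is routine but is where all the care resides; summing the resulting identity over $\pi\in NC(n)$ then completes the proof.
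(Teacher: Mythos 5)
The paper gives no argument for this lemma beyond the remark that it is ``a direct application of the definitions,'' and your proposal is a correct, more careful unwinding of exactly those definitions: linearity of $f\mapsto f_{i_0,\ldots,i_n}$ applied to the defining recursion for $\kappa$, followed by the $B$-bimodule bookkeeping that matches the inserted copies of $B$ with the new leaves of Construction~\ref{construction: messy tree}. This is the same (implicit) approach as the paper, just made explicit.
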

\section{Main result}\label{sec: main  result}
\begin{defi}
The cooperad $\coasA$ is the categorical product of the coassociative cooperad and the cooperad which is the algebra $A$ concentrated in arity $0$. 
\end{defi}
Let $V=\langle *\rangle$ be a one-dimensional free module. There is an explicit presentation of the sum of the modules of the underlying collection of $\coasA$ as $\bigoplus_{n=1}^\infty (A\oplus V)^{\otimes n}$. Here the arity $n$ module consists of those elements that are degree $n$ in the generator $*$ of $V$.

In this presentation, the cocomposition map is given as follows. Let $w$ be a word in $*$ and $A$; let $F(W)$ be the the set of all ways of writing $w$ as the concatenation $b_0a_1b_1\cdots a_nb_n$ where 
\begin{itemize}
\item The words $b_i$ are (possibly empty) words in $A$
\item The words $a_i$ are nonempty words in $*$ and $A$.
\end{itemize}
Then 
\[
\Delta w =\sum_{F(W)}(b_0*b_1*\cdots *b_n)\circ (a_1\otimes\cdots \otimes a_n).
\]
The projection map to $\coas$ is given by projecting to $\bigoplus V^{\otimes n}$ and identifying $*^{\otimes n}$ with $1$ in the ground ring. The projection map to $A$ is given by projecting to ${A\oplus V}^{\otimes 1}$, identifying $A$ with itself and $*$ with the image of $I$.
\begin{defi}
Let $A$ be a $B$-valued probability space. The \emph{moment morphism} $M:\coasA\to \cofree(\End B)$ is the map of cooperads determined by its linear restriction $m:\coasA\to \End B$ which is defined on the word 
\[
w=\underbrace{*\ldots *}_{i_0}a_1\underbrace{*\ldots *}_{i_1}a_2\ldots a_m\underbrace{*\ldots *}_{i_m}
\]
with $\sum i_j=n$ as 
\[
m(w)(b_1\otimes\cdots\otimes b_n)=(E\circ\mu_A)_{i_0,\ldots,i_m}(a_1\otimes\ldots\otimes a_m)
\]
or more explicitly
\[
m(w)(b_{0,1}\otimes\cdots\otimes b_{m,i_m})=
E(b_{0,1}\cdots b_{0,i_0}a_1b_{1,1}\cdots b_{1,i_1}a_2\cdots a_{m}b_{m,1}\cdots b_{m,i_m}).
\]
The \emph{free cumulant morphism} $K:\coasA\to \cofree(\End B)$ is the map of cooperads $\Phi^{-1}\circ M$.
\end{defi}

See Figure~\ref{fig: moment cumulant diagram}.
\begin{figure}\label{fig: moment cumulant diagram}
\[
\xymatrix{
	&\End B
	\\\\	
	&\cofree(\End B)\ar[uu]&\free(\End B)\ar@{=>}[uul]_{\boldsymbol{\epsilon}}
	\\
	\coasA\ar[uuur]^{m}\ar@{:>}[ur]_M\ar@{:>}[dr]^K\ar@{.>}[dddr]_k
	\\
	&\cofree(\End B)\ar@{:>}[uu]^\Phi\ar[uur]_{\psi}\ar[dd]
	\\\\
	&\End B
}
\]
\caption{This figure shows the relationship between moments and cumulants. In the diagram, single arrows are linear maps and double arrows are cooperad and/or operad maps.}
\end{figure}
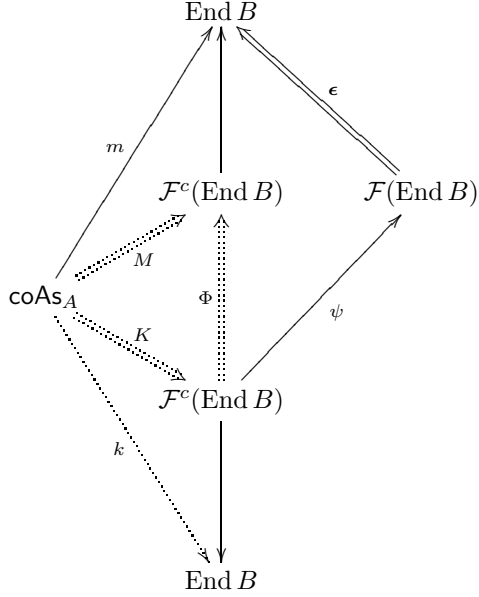

\begin{thm}[Main Result]
Let $A$ be a $B$-valued probability space. 

The restriction $k$ of the free cumulant morphism $K$ consists of the following:
\begin{enumerate}
\item The map $k(\underbrace{*\ldots*}_n)$ is $(-1)^n$ times multiplication $B^{\otimes n}\to B$ for $n>1$.
\item Let $\alpha_1,\ldots, \alpha_{m-1}$ be in $\{0,1\}$. Then 
\[
k(a_1\underbrace{*\ldots*}_{\alpha_1}a_2\ldots a_{m-2}\underbrace{*\ldots *}_{\alpha_{m-1}}a_m)
\]
(this description is slightly misleading because $\alpha_j\in \{0,1\}$) is
\[(\kappa_m)_{0,\alpha_1,\ldots,\alpha_{m-1},0}(a_1\otimes\cdots\otimes a_m)
\]
(in particular $k(a_1\ldots a_m)=\kappa(a_1\otimes\cdots \otimes a_m)$).
\item Applied to any word which contains an element of $A$ and also two consecutive $*$ or a word beginning or ending with $*$ which contains an element of $A$, the map $k$ vanishes.
\end{enumerate}
\end{thm}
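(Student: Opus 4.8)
The plan is to prove all three cases simultaneously by reducing the theorem to a single restriction identity and then reading the cases off. Write $F_r$ for the restriction of a cooperad map $F$ into $\cofree(\End B)$ to the arity part $\End B\subset\cofree(\End B)$, as in Lemma~\ref{lemma:isomorphismsmatch}. Because $\Phi$ is an isomorphism and $M$ is cofreely determined by $m=M_r$, the assignments $m\leftrightarrow M\leftrightarrow K=\Phi^{-1}\circ M\leftrightarrow k=K_r$ are mutually inverse bijections, so it suffices to pin down $k$. By Definition~\ref{defi:canonical twist}, $\Phi_r=\phi_{\End B}=\boldsymbol{\epsilon}\circ\psi$, and since $(\Phi\circ K)_r=(p_{\End B}\circ\Phi)\circ K=\Phi_r\circ K$, the identity $M=\Phi\circ K$ restricts to $m=\phi_{\End B}\circ K$ as linear maps $\coasA\to\End B$. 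Thus it is enough to take the proposed $k$, let $K'$ be the cooperad map it determines cofreely, and verify $\phi_{\End B}\circ K'=m$; this forces $K'=K$ and hence $k=K'_r$.

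First I would unwind the left-hand side. By the cofree universal property, $K'(w)$ is the sum, over planar rooted trees $T$, of the decomposition of $w$ along $T$ (computed from the explicit cocomposition of $\coasA$) with $k$ placed at each vertex; and since $\phi_{\End B}=\boldsymbol{\epsilon}\circ\psi$ merely composes all decorations in $\End B$, one gets $\phi_{\End B}\bigl(K'(w)\bigr)=\sum_T\boldsymbol{\epsilon}\bigl(\Delta_T w;\,k\bigr)$. The vanishing clause, case~(3), then does the bookkeeping: a tree contributes only when every vertex receives a word on which $k$ is nonzero, namely either a block of consecutive stars, decorated by case~(1) with the signed product $(-1)^j\mu_B^{(j)}\colon B^{\otimes j}\to B$, or a mixed word carrying only isolated stars and no leading or trailing star, decorated by case~(2) with $(\kappa_m)_{0,\alpha_1,\ldots,\alpha_{m-1},0}$. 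That admissible cumulant vertices neither begin nor end with a star is precisely the content of the remark following Construction~\ref{construction: messy tree}.

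Next I would group the admissible trees by the noncrossing partition they present. Reading off the cumulant vertices and their isolated-star insertions, each admissible $T$ determines a partition $\pi\in NC(n)$ and a tree of the shape $\pi_{i_0,\ldots,i_n}$ of Construction~\ref{construction: messy tree}: the cumulant vertices match the ``bottom'' vertices of $\pi_{i_0,\ldots,i_n}$, while the star-multiplication vertices sit over the ``top'' and root vertices. By Lemma~\ref{lemma: moment-cumulant formula with insertions} the target $m(w)$ is $\sum_{\pi\in NC(n)}\hat{\kappa}(\pi,\bullet)$, and by Definition~\ref{defi: multiplicative function} each $\hat{\kappa}(\pi,\bullet)$ is the composition along $\pi_{i_0,\ldots,i_n}$ carrying the \emph{unsigned} product $\mu_B$ on every top and root vertex and a cumulant with insertions $f^{(k)}_{0,i_1,\ldots,i_k,0}$ on each bottom vertex. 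So the verification reduces, partition by partition, to the claim that the signed star-multiplication vertices collapse onto these unsigned multiplicative decorations, the $B$-linearity of each $\kappa$ absorbing the resulting products into the single insertion slots.

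The main obstacle is exactly this sign cancellation, a M\"obius inversion undoing the ``compose everything'' twist on the multiplicative part. I would settle it first on pure stars $w=*^n$: associativity of $\mu_B$ factors out $\mu_B^{(n)}$ and reduces $\phi_{\End B}(K'(*^n))=\mu_B^{(n)}$ to the numerical identity $\sum_T\prod_v c_{k_v}=1$, summed over the reduced planar trees with $n$ leaves appearing in $\widetilde{\Delta}$ (those with every vertex of arity at least two), where a bare leaf contributes the base term and a vertex of arity $k$ contributes $c_k=(-1)^k$. Encoding the left side by $F(x)=\sum_n\bigl(\sum_T\prod_v c_{k_v}\bigr)x^n$, the reduced-tree recursion is $F=x+\sum_{k\ge2}c_kF^k=x+F^2/(1+F)$, whose solution is $F(x)=x/(1-x)$; the signed sum is therefore $1$ in every arity, which is the required cancellation and confirms the sign $(-1)^n$ of case~(1). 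For a general admissible group the same telescoping runs locally along each maximal run of consecutive $B$-insertions arising from the top and root multiplications of $\pi_{i_0,\ldots,i_n}$: the signed groupings into star-multiplication vertices cancel by the identical identity, collapsing to the single positive $\mu_B$, while $B$-linearity simultaneously folds every isolated insertion $\alpha_j=1$ together with its overhanging star-multiplications into the moment's insertion $f^{(k)}_{0,i_1,\ldots,i_k,0}$ and forces the vanishing of any configuration with a leading, trailing, or repeated star. Summing the collapsed groups over $\pi\in NC(n)$ returns $\sum_\pi\hat{\kappa}(\pi,\bullet)=m(w)$, completing the verification, and the three displayed cases are then read directly off the admissible vertex types.
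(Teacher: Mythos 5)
Your argument is correct and follows the paper's proof in all essentials: both reduce $M=\Phi\circ K$ to the square $m=\boldsymbol{\epsilon}\circ\Tt(k)\circ\boldsymbol{\eta}$, expand over decorated planar trees, group the nondegenerate summands by the noncrossing partition they induce and match them against $\pi_{i_0,\ldots,i_n}$ and Lemma~\ref{lemma: moment-cumulant formula with insertions}. The only real variations are presentational: you verify the guessed $k$ rather than extracting it by induction on weight, and you evaluate the signed sum over reduced star-trees via the functional equation $F=x+F^2/(1+F)$, i.e.\ $F=x/(1-x)$, where the paper instead invokes the contractibility of the associahedron --- the same combinatorial identity computed two ways.
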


\begin{proof}
For the duration of the proof, let $w$ be a word in $\overline{\coasA}(n)$ which contains precisely the $m$ letters (in order) $a_1,\ldots, a_m$ from $A$ and $n$ $*$ symbols, say
\[
w= \underbrace{*\ldots *}_{i_0}a_1\underbrace{*\ldots *}_{i_1}a_2\ldots a_m\underbrace{*\ldots *}_{i_m}
 \] 
with $i_j\ge 0$ and $\sum i_j=n$.

We give this word weight $2m-1+ \sum i_j$. Weight is nonnegative, positive on the cokernel of the coaugmentation, and preserved by the decomposition map of $\coasA$. The proof will proceed by induction on weight in each case.

Using the equivalent characterization $M=\Phi\circ K$ gives a recursive definition of $k$ on the weight $L$ component in terms of the value of $m$ on the weight $L$ component and the value of $k$ on components of strictly smaller weight. That is, since the codomain $\cofree(\End B)$ is cofree, it suffices to project to $\End B$ for the definition, which yields $m=\phi\circ K$. Then via~\cite[Prop. 5.8.6]{LodayVallette:AO} (or more properly speaking, its nonsymmetric version), the map $K$ can be written at the level of collections as the composition 
\[
\coasA\xrightarrow{\boldsymbol{\eta}}
\Tt(\coasA)
\xrightarrow{\Tt(k)}
\Tt(\End B)
\]
Since $\phi$ is just $\boldsymbol{\epsilon}\circ \psi$, and $\psi$ is the identity at the level of collections, we can the relationship between $m$ and $k$ as the commutativity of the following diagram:
\begin{equation}\label{equation: square}
\begin{gathered}
\xymatrix{
	\coasA\ar[r]^m\ar[d]_{\boldsymbol{\eta}}&\End B\\
	\Tt(\coasA)\ar[r]_{\Tt(k)}&\Tt(\End B)\ar[u]_{\boldsymbol{\epsilon}}	
}
\end{gathered}
\end{equation}
where $\boldsymbol{\eta}$ and $\boldsymbol{\epsilon}$ are the canonical decomposition and composition maps.

Now $\boldsymbol{\eta}(w)$ consists of a sum of trees decorated with elements of $\overline{\coasA}(j)$ for $j\le n$. There is one summand corresponding to a tree with a single vertex decorated by $w$ itself. This summand will contribute $k(w)$ to the eventual equation, and it is the purpose of the inductive step to determine its value. We will call this summand the \emph{trivial} summand and call the tree $T_{\triv}$.

We call decorated trees with a vertex decoration which contains an element of $A$ and either the string $**$ or the symbol $*$ at the beginning or end of the decoration \emph{degenerate}. In the first and second case of the statement of the theorem, by the inductive premise, only summands corresponding to nondegenerate decorated trees can contribute te $\boldsymbol{\eta}(w)$. In the third case, $T_{\triv}$ is the only degenerate decorated tree that may contribute.

Any nondegenerate decorated tree $T$ induces a partition $\pi_T$ of $[m]$ by saying $p$ and $q$ are in the same block if $a_p$ and $a_q$ are in the same vertex decoration. This partition is necessarily non-crossing because the original tree was planar. 

For $\pi$ a partition, let $\Tt_\pi$ be the set of nondegenerate decorated trees $T$ such that $\pi_T=\pi.$ Then the sum calculating $\boldsymbol{\eta}(w)$ polarizes into subsums:
\[
\sum_T \boldsymbol{\eta}(w)_T = \sum_{\pi\in NC(m)}  \sum_{T\in \Tt_{\pi}}\boldsymbol{\eta}(w)_T 
 \] 
in the first two cases in the statement of the theorem (in fact, in the first case $m$ is always $0$), and 
\[
\sum_T \boldsymbol{\eta}(w)_T = T_{\triv} + \sum_{\pi\in NC(m)}  \sum_{T\in \Tt_{\pi}}\boldsymbol{\eta}(w)_T 
 \] 
in the third case of the statement of the theorem.

Then given a partition $\pi$ arising from a nondegenerate decorated tree in the sum, the set of vertex decorations of $T\in \Tt_\pi$ which contain a letter of $A$ is independent of $T$. This set of vertex decorations can be recovered from $\pi$ as follows. Let $a_{i_0},\ldots, a_{i_j}$ be a block of $\pi$. Then necessarily the vertex decoration is of the form $a_{i_0}*^{\alpha_1}\cdots *^{\alpha_j} a_{i_j}$ where each $\alpha_p$ is either $0$ or $1$. If $a_{i_\ell}$ and $a_{i_{\ell+1}}$ are adjacent in $w$, then necessarily $\alpha_\ell=0$. On the other hand, if $a_{i_\ell}$ and $a_{i_{\ell+1}}$ are separated in $w$, then necessarily $\alpha_\ell=1$.

Fix a non-crossing partition $\pi$. Then there is a unique decorated tree in $T_\pi$ with a minimal number of vertices, obtained as a decoration of $\pi_{i_0,\ldots, i_n}$. The ``bottom'' vertices of this tree are decorated by the unique decoration described in the previous paragraph and all other vertices are decorated by $*\ldots *$.

Then it is straightforward to verify that the set of nongenerate decorated trees $T_\pi$ consists of all trees obtained from this decoration of $\pi_{i_0,\ldots, i_n}$ by replacing a vertex decorated by $*\ldots*$ by a tree of the same overall arity, all of whose vertices are at least trivalent, and all of whose vertices are decorated by $*\ldots*$.

At this point, it may be better to split into cases corresponding to the cases in the statement of the theorem.
\begin{enumerate}
\item On the word $w_n=(\underbrace{*\ldots*}_n)$ with $n>1$, the canonical decomposition $\boldsymbol{\eta}(w_n)$ is then the sum over all planar rooted trees with $n$ leaves; for each such tree the labels are all $*\ldots *$. On all nontrivial trees, $\boldsymbol{\epsilon}\circ \Tt(k)$ is, up to sign, just multiplication $\mu_n:B^{\otimes n}\to B$. Then by induction we have the formula
\[
\mu_n = m(w_n)=k(w_n)+ \sum \varepsilon_T \mu_n
\]
In fact the indexing set of trees $T$ for the sum is in canonical bijection with the non-top cells of the $n$-dimensional associahedron, and the sign $\varepsilon_T$ is just the dimension of the corresponding cell, so this is essentially a sum which calculates the Euler characteristic of the associahedron. The associahedron is contractible so we get
\[
\mu_n = k(w_n) + \mu_n(1- (-1)^n).
\]
\item Each individual set of trees $\Tt_\pi$ has its summands in bijection, as in the previous case, with faces of associahedra. To be precise, in this case there is a product of associahedra, one for each vertex of $\pi_{i_0,\ldots, i_n}$ decorated by $*\ldots *$. As in the previous case, the signs of $k$ applied to these decorations are such that after applying $\boldsymbol{\epsilon}$ to the subtrees where each vertex is decorated by $*\ldots *$, what is obtained is a redecoration of the tree $\pi_{i_0,\ldots, i_n}$, now by $\End B$, as follows.
\begin{enumerate}
\item Vertices that were previously decorated by $*\ldots *$ are now decorated by the product in $\End B$, with no sign.
\item Vertices that were previously decorated by 
\[a_{i_0}\underbrace{*\ldots *}_{\alpha_1}a_{i_1}\ldots a_{i_{j-1}}\underbrace{*\ldots*}_{\alpha_{j}}a_{i_j}\]
(this description is slightly misleading because $\alpha_j\in \{0,1\}$) are now decorated by induction by \[(\kappa_{j})_{0,\alpha_0,\ldots, \alpha_{j},0}(a_{i_0}\otimes\cdots \otimes a_{i_j})\] as in the statement of the theorem, except for the following special case.
\item the single tree $T_{\triv}$ is decorated by $k(w)$.
\end{enumerate}
Then by Lemma~\ref{lemma: moment-cumulant formula with insertions}, the equation $m(w)=\boldsymbol{\epsilon}\circ \Tt(k)\circ\boldsymbol{\eta}(w)$ is the same as the moment-cumulant formula for $E_{{i_0},{i_1},\ldots, {i_m},0}(a_1\otimes\cdots \otimes a_m)$, up to the difference
\[
k(w)-(\kappa_m)_{i_0,i_1,\ldots,i_m}(a_1\otimes\cdots\otimes a_m).
\]
So these two expressions are equal, as desired.
\item This is similar to the second case. Again, each set of trees $\Tt_\pi$ is in bijection with faces of products of associahedra and by the same trick one obtains a redecoration of $\pi_{i_0,\ldots, i_n}$. In this case the tree $T_{\triv}$ is not part of any $\Tt_\pi$ but instead is its own separate summand. Then in this case the equation $m(w)=\boldsymbol{\epsilon}\circ \Tt(k)\circ\boldsymbol{\eta}(w)$ is the same as the moment-cumulant formula for $E_{{i_0},{i_1},\ldots, {i_m},0}(a_1\otimes\cdots \otimes a_m)$, up to the difference
\[
k(w)
\]
so $k(w)$ is zero, as desired.
\end{enumerate}
\end{proof}
\subsection*{Concluding remarks}
\begin{remark*}
Equation~(\ref{equation: square}) in the preceding proof suggests a different interpretation of the main result. The maps $k$ and $m$ can be understood as maps of collections between the underlying collection of the cooperad $\coasA$ and the underlying collection of the operad $\End B$.  The space of maps of collections from a cooperad to an operad possesses a rich natural structure (see~\cite[6.4,10.2.3]{LodayVallette:AO} for details and notation). Apparently the relational equation can be expressed in terms of the convolution by the following expression:
\[m=\sum_{n=1}^\infty k^{\circledcirc n}\approx\frac{k}{1-k}.
\]
\end{remark*}
\begin{remark*}
The entire paper could be modified to work with symmetric operads; in this case it would be reasonable to replace $\coasA$ with a commutative version $\cocomA$. This would only make sense in the case that both $A$ and $B$ are commutative. As one might reasonably expect, this analagous procedure seems to describe the \emph{classical} cumulants as the canonical twist of the moments. As there are many direct combinatorial presentations~\cite{RotaShen:OCC} for classical cumulants and even a significantly more direct approach from the operadic point of view~\cite{DrummondColeParkTerilla:HPTII}, any details or verification have been omitted.
\end{remark*}

\bibliographystyle{amsalpha} 
\bibliography{references-2016}
\end{document}